\theoremstyle{plain}
\newtheorem*{thmnonnum}{Theorem}
\theoremstyle{main}
\theoremstyle{main}
\newtheorem{lem}{Lemma}
\newtheorem{rema}{Remark}
\newtheorem{fact}{Fact}
\newtheorem*{remer}{Acknowledgements}
\newtheorem{defi}{Definition}
\newtheorem{coro}{Corollary}
\newtheorem{nota}{Notation}
\newcommand\nn{\mathbb{N}}
\newcommand\cc{\mathbb{C}}
\newcommand\tq{\mbox{ } | \mbox{ }}
\newcommand\cali[1]{\mathcal{#1}}
\newcommand\hilb{\mathcal{H}}
\DeclareMathOperator{\id}{Id}
\DeclareMathOperator{\fix}{Fix}
\title{On a generalization of the Howe-Moore property}
\author{Antoine Pinochet Lobos}\thanks{Aix-Marseille Université, a.p.lobos@outlook.com}
\begin{document}

\maketitle

\renewcommand{\abstractname}{Abstract}
\begin{abstract}
We define a Howe-Moore property relative to a set of subgroups. Namely, a group $G$ has the Howe-Moore property relative to a set $\mathcal{F}$ of subgroups if for every unitary representation $\pi$ of $G$, whenever the restriction of $\pi$ to any element of $\mathcal{F}$ has no non-trivial invariant vectors, the matrix coefficients vanish at infinity. We prove that a semisimple group has the Howe-Moore property relatively to the family of its factors.
\end{abstract}

\section{Introduction}

In \cite{HOWEMOORE}, Howe and Moore discovered a very interesting property of connected, non-compact, simple Lie groups with finite center: whenever they act ergodically on a probability space by preserving the measure, the action is automatically mixing. This property, rephrased purely in terms of unitary representations has since been called the \textbf{Howe-Moore} property. Later, other topological groups were proved to enjoy this property.

In \cite{CIOBO}, a very beautiful paper, Ciobotaru synthesizes the proofs of all known cases of groups having the Howe-Moore property, giving a unified proof.

In this paper, we generalize further the unified proof of \cite{CIOBO} so that it also applies to products and, in particular, generalizes the situation of products of Lie groups considered in \cite[Theorem 1.1, p. 81]{BEKKAMAYER}).

\section{Statement of the results}

Let $G$ be a topological group.

\begin{nota}
If $g \in G^\nn$, let us write $\lim_{n \to \infty} g_n = \infty$ if for every compact subset $K$ of $G$, there is an integer $N$ such that for any integer $n$ such that $n \geq N$, $g_n \not \in K$.

If $f : G \rightarrow \cc$, if $a \in \cc$, we write $\lim_{g \to \infty} f(g) = a$ when we have \[\forall \epsilon > 0,\ \exists K \subset G,\quad K\mbox{ is compact and}\ \forall g \not \in K,\ \vert f(g) - a \vert \leq \epsilon.\]
\end{nota}

\begin{defi}\textbf{(Cartan decomposition)} 

We say that a triplet $(K_1,A^+,K_2)$ is a \textbf{Cartan decomposition} of $G$ if the following conditions are satisfied:
\begin{enumerate}
\item $K_1$ and $K_2$ are compact subsets of $G$,
\item $A^+$ is an abelian subsemigroup $G$, that is, $\forall a_1,a_2 \in A^+,\ a_1 a_2 = a_2 a_1 \in A^+$ and
\item $G = K_1A^+K_2$.
\end{enumerate}
\end{defi}

\begin{nota}
If $a \in G^\nn$, we set \[U^+_{a} := \{g \in G \tq \lim_{n \to \infty} a_n^{-1}\ g\ a_n = e\}\ \mbox{et}\] \[U^-_{a} := \{g \in G \tq \lim_{n \to \infty} a_n\ g\ a^{-1}_n = e\}.\]We call them the \textbf{the positive and negative contracting subgroups} associated to $a$.
\end{nota}

\begin{defi}\textbf{(Mautner's property)}

Let $\cali{F}$ be a set of subgroups of $G$, and $A$ a subset of  $G$. We say that $(G,A)$ has \textbf{Mautner's property} relative\footnotemark[1] to $\cali{F}$ if \[\forall a \in A^\nn\ \left(\lim_{n \to \infty} a_n = \infty\right) \Longrightarrow \left(\exists F \in \cali{F},\ \exists b \ \mbox{subsequence of}\ a, \quad F \subseteq \overline{\langle U^+_{\mathbf{b}},U^-_{\mathbf{b}}\rangle}\right).\]
\end{defi}

\footnotetext[1]{If $\cali{F} = \{G\}$, we omit ``relative to $\cali{F}$".}

\begin{rema} In \cite{CIOBO}, it is proved that the following groups have Cartan decompositions $(K_1,A^+,K_2)$ such that $(G,A^+)$ has the Mautner property:
\begin{enumerate}
\item simple algebraic groups over a non-archimedean local field;
\item subgroups of the group of automorphisms of a $d$-biregular tree for $d\geq 3$ that are topologically simple and that act $2$-transitively on the boundary of the tree;
\item noncompact, connected, semisimple Lie groups with a finite center.
\end{enumerate}
\end{rema}

\begin{nota}
If $\pi : G \rightarrow U(\cali{H})$ is a unitary representation of $G$ and $F$ is a subgroup of $G$. We denote by\[\fix(\pi,F) := \{\phi \in \cali{H} \tq \forall g \in F,\quad \pi(g)\phi = \phi\}.\]
\end{nota}

\begin{defi}\textbf{(Relative Howe-Moore property)}

Let $\cali{F}$ be a set of subgroups of $G$. We say that $G$ has the \textbf{Howe-Moore property} relative\footnotemark[2] to $\cali{F}$ if \[\forall \pi : G \rightarrow U(\cali{H}),\quad \left(\forall F \in \cali{F},\quad \fix(\pi,F) = \{0\}\right) \Longrightarrow \left(\forall \phi,\psi \in \cali{H},\quad \lim_{g \to \infty} \langle \pi(g)\phi,\psi\rangle = 0\right).\]
\end{defi}

\footnotetext[2]{As for Mautner's property, if $\cali{F} = \{G\}$, we omit ``relative to $\cali{F}$".}

\begin{rema} In \cite{CLUCORLOUTESVAL}, one can find a ``relative Howe-Moore property", but the one we consider in the present note is different.
\end{rema}

Our main result is the following.

\begin{thmnonnum} Let $\cali{F}$ be a set of subgroups of $G$. If $G$ admits a Cartan decomposition $(K_1,A^+,K_2)$ such that $(G,A^+)$ has the Mautner property relative to $\cali{F}$, then it satisfies the Howe-Moore property, relative to $\cali{F}$.
\end{thmnonnum}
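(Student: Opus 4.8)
The plan is to argue by contraposition. Suppose $\pi : G \to U(\mathcal{H})$ is a unitary representation for which the conclusion fails: there exist $\phi,\psi \in \mathcal{H}$, a constant $\delta>0$, and a sequence $g = (g_n)$ with $\lim_{n\to\infty} g_n = \infty$ and $|\langle \pi(g_n)\phi,\psi\rangle| \geq \delta$ for all $n$. I will produce some $F \in \mathcal{F}$ with $\fix(\pi,F) \neq \{0\}$, which is exactly the negation of the hypothesis. First I would use the Cartan decomposition: write $g_n = k_n a_n l_n$ with $k_n \in K_1$, $a_n \in A^+$, $l_n \in K_2$. By compactness of $K_1,K_2$ pass to a subsequence so that $k_n \to k$ and $l_n \to l$; by strong continuity $\pi(l_n)\phi \to \pi(l)\phi =: \phi'$ and $\pi(k_n^{-1})\psi \to \pi(k^{-1})\psi =: \psi'$ in norm. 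Since each $\pi(a_n)$ is an isometry, $|\langle\pi(a_n)\phi',\psi'\rangle|$ differs from $|\langle\pi(g_n)\phi,\psi\rangle|$ by a null sequence, so after a further extraction $|\langle\pi(a_n)\phi',\psi'\rangle| \geq \delta/2$. Moreover $a_n \to \infty$, since otherwise a subsequence of $(a_n)$ would lie in a compact set $C$, forcing $g_n \in K_1 C K_2$ and contradicting $g_n \to \infty$. Applying the Mautner property to $a = (a_n) \in (A^+)^{\nn}$ yields $F \in \mathcal{F}$ and a subsequence $\mathbf{b}$ of $a$ with $F \subseteq \overline{\langle U^+_{\mathbf{b}}, U^-_{\mathbf{b}}\rangle}$, and along $\mathbf{b}$ we still have $|\langle\pi(b_n)\phi',\psi'\rangle| \geq \delta/2$. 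It now suffices to exhibit a nonzero vector fixed by $U^+_{\mathbf{b}}$ and by $U^-_{\mathbf{b}}$: such a vector is fixed by $\langle U^+_{\mathbf{b}}, U^-_{\mathbf{b}}\rangle$, hence by its closure (strong continuity of $\pi$), hence by $F$.

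The engine is the contraction lemma. For $u \in U^+_{\mathbf{b}}$ and any fixed $\chi$, the identity $\pi(u)\pi(b_n)\chi = \pi(b_n)\pi(b_n^{-1}ub_n)\chi$ gives $\|\pi(u)\pi(b_n)\chi - \pi(b_n)\chi\| = \|(\pi(b_n^{-1}ub_n)-I)\chi\| \to 0$, because $b_n^{-1}ub_n \to e$; hence every weak limit of $(\pi(b_n)\chi)_n$ lies in $\fix(\pi,U^+_{\mathbf{b}})$. Symmetrically, replacing $\mathbf{b}$ by $(b_n^{-1})$ — for which the positive and negative contracting subgroups are interchanged — every weak limit of $(\pi(b_n^{-1})\chi)_n$ lies in $\fix(\pi,U^-_{\mathbf{b}})$. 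Taking $\chi = \phi'$ and extracting a weakly convergent subsequence from the bounded sequence $(\pi(b_n)\phi')_n$, I obtain a weak limit $\eta \in \fix(\pi,U^+_{\mathbf{b}})$ with $\langle\eta,\psi'\rangle = \lim\langle\pi(b_n)\phi',\psi'\rangle$ of modulus $\geq \delta/2$, so $\eta \neq 0$.

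The main obstacle, and the only place the structure of $A^+$ is essential, is to upgrade $\eta$ to a vector fixed by \emph{both} contracting subgroups. Here I would exploit that $A^+$ is abelian: for $a \in A^+$ and $u \in U^+_{\mathbf{b}}$, commuting $a^{\pm1}$ past each $b_n$ (legitimate since $a$ and $b_n$ commute, hence so do their inverses) gives $b_n^{-1}(a^{\mp1}ua^{\pm1})b_n = a^{\mp1}(b_n^{-1}ub_n)a^{\pm1} \to e$, so conjugation by $a$ and by $a^{-1}$ both preserve $U^+_{\mathbf{b}}$; consequently $\fix(\pi,U^+_{\mathbf{b}})$ is invariant under $\pi(a)$ and $\pi(a^{-1})$ for every $a \in A^+$. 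In particular each $\pi(b_n^{-1})\eta$ lies in the subspace $\fix(\pi,U^+_{\mathbf{b}})$, which, being a closed subspace, is weakly closed. Extracting once more, let $\eta_-$ be a weak limit of $(\pi(b_n^{-1})\eta)_n$. By the symmetric contraction lemma $\eta_- \in \fix(\pi,U^-_{\mathbf{b}})$, and by weak closedness $\eta_- \in \fix(\pi,U^+_{\mathbf{b}})$ as well. Finally $\eta_-$ is nonzero: since $\pi(b_n)\phi' \rightharpoonup \eta$, one has $\langle\pi(b_n^{-1})\eta,\phi'\rangle = \langle\eta,\pi(b_n)\phi'\rangle \to \|\eta\|^2 > 0$, whence $\langle\eta_-,\phi'\rangle = \|\eta\|^2 \neq 0$.

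Thus $\eta_- \in \fix(\pi,U^+_{\mathbf{b}}) \cap \fix(\pi,U^-_{\mathbf{b}})$ is a nonzero vector fixed by $F$, contradicting $\fix(\pi,F) = \{0\}$ and completing the contraposition. Beyond the core two-stage weak-limit argument above, I expect the only delicate bookkeeping to be the handling of the successive subsequence extractions: one must check that passing to a subsequence only enlarges $U^+_{\mathbf{b}}$ and $U^-_{\mathbf{b}}$, so that the inclusion $F \subseteq \overline{\langle U^+_{\mathbf{b}}, U^-_{\mathbf{b}}\rangle}$ and the contraction estimates survive each extraction, and that the coefficient bound $|\langle\pi(b_n)\phi',\psi'\rangle| \geq \delta/2$ is preserved throughout.
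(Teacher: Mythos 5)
Your proof is correct, and its core differs from the one in the paper. The reduction steps coincide: your Cartan/compactness argument is Lemma \ref{coefficientmautner}, and your ``contraction lemma'' is the standard Mautner computation. But where the paper passes to a \emph{diagonal} coefficient (Lemma \ref{coefficientdiagonal}) and then, in Lemma \ref{mautnerplusplus}, extracts a weak \emph{operator} limit $E$ of the commuting family $(\pi(a_n))_n$ --- invoking Fact \ref{limitcommutnormal} so that $E$ is normal and commutes with the $\pi(a_n)$'s, which is what lets a single vector in the range of $E$ be fixed by both contracting subgroups --- you never leave the level of vectors. You take $\eta$ a weak limit of $\pi(b_n)\phi'$ (fixed by $U^+_{\mathbf b}$, nonzero since $\langle\eta,\psi'\rangle\neq 0$, so the diagonal reduction becomes unnecessary), and then a second weak limit $\eta_-$ of $\pi(b_n^{-1})\eta$, which is fixed by $U^-_{\mathbf b}$ by the symmetric contraction estimate and remains fixed by $U^+_{\mathbf b}$ because commutativity of $A^+$ makes $\fix(\pi,U^+_{\mathbf b})$ invariant under $\pi(b_n^{\pm 1})$ and that subspace is weakly closed; nondegeneracy follows from $\langle\eta_-,\phi'\rangle=\Vert\eta\Vert^2$. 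Both arguments spend the commutativity of $A^+$ at exactly one point --- the paper to make the limiting operator normal and commuting, you to conjugate $U^+_{\mathbf b}$ by elements of $A^+$ --- but your two-stage vector argument is more elementary and self-contained: it needs no operator-valued weak compactness and no discussion of normality, at the modest cost of one extra extraction and the invariance/weak-closedness observation. Your closing remark about subsequences only enlarging $U^{\pm}_{\mathbf b}$ is indeed the right bookkeeping point and is all that is needed to make the successive extractions harmless.
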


\begin{rema} In the case where $\cali{F} = \{G\}$, then the theorem is just \cite[Theorem 1.2, p. 2]{CIOBO}.
\end{rema}

The following consequence is useful.

\begin{coro}\label{corollaire1} Let $G_1,...,G_N$ be groups having Cartan decompositions $(K_{i,1},A^+_i,K_{i,2})$ such that for all $i$, $(G_i,A^+_i)$ has the Mautner property. Then the product $G:= \Pi_i G_i$ has the Howe-Moore property, relative to $\{G_1,\cdots ,G_N\}$.
\end{coro}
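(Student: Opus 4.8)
The plan is to deduce the corollary from the main theorem, so the entire task reduces to verifying its hypotheses for the product group $G = \prod_i G_i$ with respect to the family $\cali{F} = \{G_1,\dots,G_N\}$. First I would construct a Cartan decomposition of $G$ from those of the factors. The natural candidate is $K_1 := \prod_i K_{i,1}$, $A^+ := \prod_i A^+_i$, and $K_2 := \prod_i K_{i,2}$. Checking the three axioms of a Cartan decomposition is routine: $K_1$ and $K_2$ are products of compact sets, hence compact; $A^+$ is a product of abelian subsemigroups, hence an abelian subsemigroup; and $G = K_1 A^+ K_2$ follows coordinatewise from $G_i = K_{i,1} A^+_i K_{i,2}$. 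So the only real content is the Mautner property.

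Thus the hard part—and the main obstacle—is to verify that $(G, A^+)$ has the Mautner property relative to $\cali{F} = \{G_1,\dots,G_N\}$. I would take a sequence $a \in (A^+)^\nn$ with $a_n \to \infty$ in $G$. Here I must exploit the product structure: a subset of $G = \prod_i G_i$ is compact iff it is contained in a product of compact subsets of the factors, so $a_n \to \infty$ in $G$ forces at least one coordinate sequence $(a_{n}^{(i)})_n$ to leave every compact subset of $G_i$ along some subsequence. The goal is to produce a single factor $G_{i_0}$ and a subsequence $b$ of $a$ such that $G_{i_0} \subseteq \overline{\langle U^+_{\mathbf{b}}, U^-_{\mathbf{b}}\rangle}$. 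The strategy is to pick an index $i_0$ whose coordinate escapes to infinity (passing to a subsequence so that $a^{(i_0)}_n \to \infty$ in $G_{i_0}$), and then apply the Mautner property of $(G_{i_0}, A^+_{i_0})$ to this escaping coordinate sequence. Since each $G_{i_0}$ has Mautner relative to $\{G_{i_0}\}$ (the non-relative version), we obtain a further subsequence along which $G_{i_0} \subseteq \overline{\langle U^+_{\mathbf{c}}, U^-_{\mathbf{c}}\rangle}$ computed inside $G_{i_0}$, where $\mathbf{c}$ is the $i_0$-coordinate of the chosen subsequence $\mathbf{b}$ of $a$.

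The delicate point to get right is the relationship between the contracting subgroups computed in the factor $G_{i_0}$ and those computed in the ambient product $G$. I would argue that because conjugation in $G$ is coordinatewise, an element $g = (g^{(1)}, \dots, g^{(N)})$ lies in $U^+_{\mathbf{b}}$ (in $G$) exactly when each $g^{(i)}$ lies in the positive contracting subgroup of the $i$-coordinate sequence of $\mathbf{b}$ in $G_i$; in particular, embedding $G_{i_0}$ as the $i_0$-th factor, the contracting subgroup $U^+_{\mathbf{c}}$ of the coordinate sequence inside $G_{i_0}$ coincides (under this embedding) with the intersection of $U^+_{\mathbf{b}}$ with the factor $G_{i_0}$. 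This identification lets me transport the inclusion $G_{i_0} \subseteq \overline{\langle U^+_{\mathbf{c}}, U^-_{\mathbf{c}}\rangle}$ from $G_{i_0}$ into $G$, yielding $G_{i_0} \subseteq \overline{\langle U^+_{\mathbf{b}}, U^-_{\mathbf{b}}\rangle}$ as required by the definition of Mautner's property relative to $\{G_1,\dots,G_N\}$.

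With the Mautner property verified, I would simply invoke the main theorem: since $G$ admits a Cartan decomposition $(K_1, A^+, K_2)$ with $(G, A^+)$ having the Mautner property relative to $\cali{F} = \{G_1,\dots,G_N\}$, the group $G$ has the Howe-Moore property relative to $\{G_1,\dots,G_N\}$, which is exactly the claim. I expect the bookkeeping around subsequences to be the only subtlety—one must be careful that passing to a subsequence to make the $i_0$-coordinate escape does not interfere with the universally quantified nature of the limit $a_n \to \infty$, but since Mautner's property already permits passing to a subsequence $b$ of $a$, this causes no difficulty.
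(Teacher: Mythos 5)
Your proposal is correct and follows essentially the same route as the paper: the paper also forms the product Cartan decomposition, picks an unbounded coordinate of the sequence, passes to a subsequence so that coordinate goes to infinity, applies the factor's Mautner property, and uses the coordinatewise description of the contracting subgroups (the paper's Lemma \ref{ssgroupcontractproduit}) to conclude before invoking the main theorem. The only cosmetic difference is that the paper packages the Mautner verification as a standalone lemma.
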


The Howe-Moore property is often used to deduce mixing from ergodicity. The following obvious corollary states the analog result for the relative Howe-Moore property.

\begin{coro}\label{corollaire2} Let $G_1, \cdots, G_N$ be topological groups admitting Cartan decompositions $(K_{i,1},A^+_i,K_{i,2})$ such that for all $i$, $(G_i,A^+_i)$ has the Mautner property. Let $G := G_1 \times \cdots \times G_N$, and let $G \curvearrowright (X,\mu)$ a measure-preserving action on a probability space such that the restriction to each of the $G_i's$ is ergodic. Then the action $G \curvearrowright (X,\mu)$ is mixing.
\end{coro}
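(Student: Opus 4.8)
The plan is to translate the dynamical statement into the language of unitary representations via the Koopman construction and then invoke Corollary \ref{corollaire1}. First I would form the Hilbert space $\cali{H} = L^2(X,\mu)$ together with the Koopman representation $\pi$ of $G$ defined by $\pi(g)f = f(g^{-1}\cdot)$; since the action preserves $\mu$, each $\pi(g)$ is unitary, so $\pi$ is a genuine unitary representation of $G$. The constant functions form a one-dimensional $G$-invariant subspace that is fixed pointwise, so the hypothesis ``no invariant vectors'' can never hold on all of $\cali{H}$. I would therefore pass to the invariant subspace $\cali{H}_0 := \{f \in L^2(X,\mu) \tq \int_X f\,d\mu = 0\}$ of functions of zero mean and let $\pi_0$ denote the restriction of $\pi$ to $\cali{H}_0$.

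The key observation is the standard dictionary between ergodicity and invariant vectors: a measure-preserving action of $G_i$ on $(X,\mu)$ is ergodic if and only if the only $G_i$-invariant functions in $L^2(X,\mu)$ are the constants, that is, if and only if $\fix(\pi_0,G_i) = \{0\}$. Thus the ergodicity of the restriction to each factor gives exactly that $\fix(\pi_0,G_i) = \{0\}$ for every $i$. At this point I would apply Corollary \ref{corollaire1}, which asserts that $G = G_1 \times \cdots \times G_N$ has the Howe-Moore property relative to $\{G_1,\ldots,G_N\}$. Feeding the representation $\pi_0$ into the definition of the relative Howe-Moore property, the condition $\fix(\pi_0,G_i) = \{0\}$ for all $i$ that we have just verified yields the conclusion that all matrix coefficients of $\pi_0$ vanish at infinity, i.e. $\lim_{g \to \infty} \langle \pi_0(g)\phi,\psi\rangle = 0$ for all $\phi,\psi \in \cali{H}_0$.

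Finally I would unwind this back into dynamics. Mixing means $\mu(gA \cap B) \to \mu(A)\mu(B)$ as $g \to \infty$ for all measurable $A,B$. Taking $\phi = \ungra_A - \mu(A)$ and $\psi = \ungra_B - \mu(B)$, both of which lie in $\cali{H}_0$, a direct computation using $\pi(g)\ungra_A = \ungra_{gA}$ and the measure-invariance $\mu(gA) = \mu(A)$ gives $\langle \pi_0(g)\phi,\psi\rangle = \mu(gA \cap B) - \mu(A)\mu(B)$, so the vanishing of the matrix coefficient is precisely the mixing condition on the pair $(A,B)$; the general statement for arbitrary $L^2$ observables follows by bilinearity and a routine density argument. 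The only substantive point is to match the two dictionaries correctly (ergodicity $\leftrightarrow$ absence of nontrivial invariant vectors, mixing $\leftrightarrow$ vanishing of matrix coefficients on $\cali{H}_0$); there is no real obstacle, since all the content is carried by Corollary \ref{corollaire1}, which is exactly why the statement is an immediate corollary.
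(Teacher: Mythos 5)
Your proposal is correct and follows exactly the route the paper takes: it passes to the Koopman representation restricted to the zero-mean subspace, identifies ergodicity of each factor with $\fix(\pi_0,G_i)=\{0\}$, and applies Corollary \ref{corollaire1}. The paper states this in one sentence and calls the rest obvious; you have simply written out the details it leaves implicit.
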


As an application, we spell out the following corollary.

\begin{coro}\label{reseauirreductibleergodicite} Let $G_1,\cdots, G_N$ be topological groups having Cartan decompositions $(K_{i,1},A^+_i,K_{i,2})$ such that for each $i$, $(G_i,A^+_i)$ has the Mautner property. Let $G := G_1 \times \cdots \times G_N$, and let $\Gamma$ be an irreducible lattice $G$. Then the action $G \curvearrowright G/\Gamma$ is mixing.
\end{coro}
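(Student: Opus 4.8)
The plan is to obtain this as a direct application of Corollary \ref{corollaire2} to the homogeneous space $G/\Gamma$, so that the proof reduces to verifying the two hypotheses of that corollary: the existence of a $G$-invariant probability measure, and the ergodicity of the restriction of the action to each factor $G_i$.

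For the first point, since $\Gamma$ is a lattice in $G$, the quotient $G/\Gamma$ carries a $G$-invariant Borel measure of finite total mass; after normalizing we obtain a $G$-invariant probability measure $\mu$, and the translation action $G \curvearrowright (G/\Gamma,\mu)$ is by construction measure-preserving. This places us exactly in the framework of Corollary \ref{corollaire2} with $(X,\mu)=(G/\Gamma,\mu)$.

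It then remains to check that each $G_i$ acts ergodically on $(G/\Gamma,\mu)$, which is precisely the point at which the irreducibility of $\Gamma$ is used. Irreducibility means that for each $i$ the projection $p_i \colon \Gamma \to \prod_{j\neq i}G_j$ has dense image (equivalently, that $G_i\Gamma$ is dense in $G$). I would convert this density into ergodicity in two standard steps. First, I would invoke the duality for homogeneous actions: the action of $G_i$ on $G/\Gamma$ is ergodic if and only if the action of $\Gamma$ on $G_i\backslash G \cong \prod_{j\neq i}G_j$ is ergodic. Second, I would use the classical fact that a dense subgroup of a locally compact, second countable group acts ergodically on it by translations with respect to Haar measure; applied to the dense image $p_i(\Gamma)$, this yields ergodicity of $\Gamma$ on $\prod_{j\neq i}G_j$, hence of $G_i$ on $G/\Gamma$. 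Feeding this into Corollary \ref{corollaire2} then gives that $G \curvearrowright G/\Gamma$ is mixing.

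I expect the main obstacle to be this passage from density of the projection to ergodicity of the factor: one must check that the duality between ergodicity on $G/\Gamma$ and on $G_i\backslash G$ is applied with compatible invariant measure classes, and that the Haar measure on the non-compact group $\prod_{j\neq i}G_j$ (which is infinite) still supports the \emph{dense subgroup acts ergodically} argument at the level of $L^\infty$-invariant functions, where constancy rather than integrability is what is used. If one instead takes factor-ergodicity as the very definition of an irreducible lattice, this step collapses and the corollary is immediate from Corollary \ref{corollaire2}.
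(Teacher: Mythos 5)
Your proposal is correct and follows essentially the same route as the paper: reduce to Corollary \ref{corollaire2}, transfer the ergodicity of $G_i \curvearrowright G/\Gamma$ to that of $\Gamma \curvearrowright G/G_i$ by the standard duality (the paper cites Zimmer for this), and deduce the latter from the density of the projection of $\Gamma$, i.e.\ from irreducibility. Your extra care about the invariant measure classes and the infinite Haar measure on $\prod_{j\neq i}G_j$ is a reasonable precaution, but the argument goes through exactly as you outline.
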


\begin{rema} The theorem and Corollary \ref{corollaire2} were already known, in the case $G$ is a semisimple group with finite center (see \cite[Theorem 1.1, p. 81 and Theorem 2.1, p. 89]{BEKKAMAYER} for a proof using Lie theory technology). In the approach we propose, Lie theory is only needed to prove that the factors satisfy the Howe-Moore property. We therefore provide an elementary shortcut for a part of their proof. Moreover, our proof is more general and applies to other topological groups.
\end{rema}

\begin{remer} We would like to adress many thanks to Christophe Pittet for his useful help and advice.
\end{remer}

\section{Proofs}

\subsection{Useful facts and notation}

We recall here some tools we need for the proofs.

\begin{nota}
A sequence $g \in G^\nn$ is said to be \textbf{bounded} if $\exists K,\ \forall n\in \nn, \quad g_n \in K$.
\end{nota}

\begin{fact}
If $G$ is locally compact, second countable, every unbounded sequence has a subsequence that goes to infinity.

If $G$ is locally compact, second countable, $f : G \rightarrow \cc$ and $a \in \cc$, we have the following sequential characterization \[\displaystyle\lim_{g \to \infty} f(g) = a \Leftrightarrow \left(\forall g \in G^\nn, \lim_{n \to \infty} g_n = \infty \Rightarrow \lim_{n \to \infty} f(g_n) = a\right).\]
\end{fact}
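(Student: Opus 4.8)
The plan is to treat the two assertions separately, the second resting on the same compact exhaustion that powers the first. The group structure plays no role; the only structural input I would use is that a locally compact, second countable (Hausdorff) space is $\sigma$-compact and, more precisely, admits an exhaustion by compact sets $K_1 \subseteq K_2 \subseteq \cdots$ with $K_m \subseteq \mathring{K}_{m+1}$ and $\bigcup_{m} K_m = G$. I would recall the standard construction: local compactness together with second countability yields a countable base $(V_i)$ of open sets with compact closures; one then builds $(K_m)$ inductively by covering the compact set $K_m \cup \overline{V_{m+1}}$ by finitely many $V_i$ and taking $K_{m+1}$ to be the union of their closures. The consequence I will use repeatedly is that \emph{every} compact subset $K \subseteq G$ lies in some $K_M$: the interiors $\mathring{K}_{m+1}$ form an increasing open cover of $K$, so finitely many of them—hence, by nesting, a single one—already cover $K$.

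For the first assertion, I would fix an unbounded sequence $g \in G^{\nn}$ and first observe that for every $m$ the index set $\{n : g_n \notin K_m\}$ is infinite. Otherwise all but finitely many terms lie in $K_m$; since each of the finitely many exceptions lies in some $K_{m'}$ (as $\bigcup K_m = G$), the whole sequence would be contained in a single $K_M$, contradicting unboundedness. This lets me extract a subsequence greedily: choose $n_1 < n_2 < \cdots$ with $g_{n_k} \notin K_k$, which is possible because each of these index sets is infinite. Then $(g_{n_k})_k$ tends to infinity, for given a compact $K$ I pick $M$ with $K \subseteq K_M$, and for every $k \geq M$ I have $K \subseteq K_M \subseteq K_k$ together with $g_{n_k} \notin K_k$, whence $g_{n_k} \notin K$.

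For the sequential characterization, the forward implication needs no hypothesis on $G$: if $\lim_{g \to \infty} f(g) = a$ and $g_n \to \infty$, then for $\epsilon > 0$ I take the compact $K$ furnished by the definition, use $g_n \notin K$ for large $n$, and conclude $|f(g_n) - a| \leq \epsilon$ eventually, so $f(g_n) \to a$. The reverse implication is where the exhaustion re-enters, and I would argue by contraposition. If $\lim_{g \to \infty} f(g) = a$ fails, there is $\epsilon_0 > 0$ such that no compact set witnesses the limit; applying this to $K = K_m$ produces, for each $m$, a point $h_m \notin K_m$ with $|f(h_m) - a| > \epsilon_0$. Exactly as above $h_m \to \infty$ (for compact $K \subseteq K_M$ and $m \geq M$ one has $h_m \notin K_m \supseteq K$), while $f(h_m) \not\to a$; hence the right-hand side fails, which is the desired contrapositive.

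As for the main obstacle, there is little genuine difficulty here; the one point deserving care is the reverse implication, where second countability—through the countable exhaustion—is indispensable. It is precisely what allows a single sequence to record the failure of the limit at every scale $K_m$ at once. Without countability one could only produce, for each compact $K$ separately, a bad point outside $K$, with no mechanism to assemble these into one sequence tending to infinity.
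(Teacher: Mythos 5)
Your proof is correct, and both halves are handled properly: the greedy extraction $g_{n_k}\notin K_k$ for the first assertion, and the contrapositive argument producing $h_m\notin K_m$ with $\vert f(h_m)-a\vert>\epsilon_0$ for the second. The paper states this Fact without any proof (it is treated as standard background), and your argument via a nested compact exhaustion $K_1\subseteq \mathring{K}_2\subseteq K_2\subseteq\cdots$ absorbing every compact set is exactly the standard reasoning the author is implicitly relying on, so there is no divergence to report.
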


When we write $\pi : G \rightarrow U(\hilb)$, it is implicit that it is both a morphism and that it is continuous for the strong operator topology (and therefore, a \textbf{unitary representation}), and that $\hilb$ is a complex, separable Hilbert space.

The following fact easily follows from the sequential weak operator compactness of the unit ball in the space of bounded operators. 

\begin{fact}\label{limitcommutnormal}

Let $T = (T_n)_{n \in \nn}$ be a sequence of normal operators of norm $1$ on a Hilbert space such that $\forall n,m\in \nn, T_n T_m = T_m T_n$. Then $T$ has a subsequence, that converges, in the weak operator topology, to a normal operator that commutes with all the $T_n's$.
\end{fact}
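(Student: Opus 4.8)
The plan is to combine the sequential weak operator compactness of the unit ball of the algebra $B(\hilb)$ of bounded operators with Fuglede's theorem and the separate weak operator continuity of multiplication. First I would extract the subsequence: since $\hilb$ is separable, I fix an orthonormal basis $(e_i)_{i\in\nn}$; for each pair $(i,j)$ the scalar sequence $(\langle T_n e_i,e_j\rangle)_n$ is bounded by $1$, so a diagonal extraction produces a subsequence $(T_{n_k})_k$ along which all of these converge. Because the $T_n$ are uniformly bounded, this is precisely the statement that $(T_{n_k})_k$ converges in the weak operator topology to some $T_\infty\in B(\hilb)$ with $\Vert T_\infty\Vert\le 1$; this is the sequential weak operator compactness of the unit ball quoted before the statement.

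Next I would settle the commutation. The key point is that left and right multiplication by a fixed bounded operator $S$ are weak-operator continuous, since $\langle SX\phi,\psi\rangle=\langle X\phi,S^*\psi\rangle$ and $\langle XS\phi,\psi\rangle=\langle X(S\phi),\psi\rangle$. Applying this to the identity $T_{n_k}T_m=T_m T_{n_k}$ and letting $k\to\infty$ gives $T_\infty T_m=T_m T_\infty$ for every $m$. I would also record, using that the adjoint is weak-operator continuous ($\langle X^*\phi,\psi\rangle=\overline{\langle X\psi,\phi\rangle}$), that $T_{n_k}^*\to T_\infty^*$, and that by Fuglede's theorem each normal $T_m$ commutes with $T_{n_k}$ forces $T_{n_k}T_m^*=T_m^* T_{n_k}$; passing to the limit, $T_\infty$ commutes with every $T_m^*$ as well.

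The main obstacle is normality of $T_\infty$: weak-operator convergence is not compatible with products of two simultaneously varying sequences, so one cannot simply pass the relation $T_{n_k}T_{n_k}^*=T_{n_k}^* T_{n_k}$ to the limit. To get around this I would pass to the unital $*$-subalgebra $\mathcal{A}$ generated by $\{T_n : n\in\nn\}$. By Fuglede's theorem the operators $T_n$ commute not only among themselves but also with all the $T_m^*$, so all the generators $T_n,T_n^*$ commute pairwise and $\mathcal{A}$ is commutative. Its weak-operator closure $\mathcal{S}$ is still commutative: for fixed $B$ the set $\{X : XB=BX\}$ is weak-operator closed, hence contains $\mathcal{S}$ as soon as it contains $\mathcal{A}$, and applying this twice shows any two elements of $\mathcal{S}$ commute. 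Moreover $\mathcal{S}$ is self-adjoint, the adjoint being weak-operator continuous.

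Finally I would conclude: $T_{n_k}\in\mathcal{A}\subseteq\mathcal{S}$ and $\mathcal{S}$ is weak-operator closed, so $T_\infty\in\mathcal{S}$, whence $T_\infty^*\in\mathcal{S}$ as well, and commutativity of $\mathcal{S}$ gives $T_\infty T_\infty^*=T_\infty^* T_\infty$. Thus $T_\infty$ is normal and commutes with every $T_n$, which is exactly the required weak-operator subsequential limit. I expect the normality step to be the crux, and the decisive input there is Fuglede's theorem, which upgrades the hypothesis ``the $T_n$ commute'' to ``the generated $*$-algebra is commutative'', the commutative abelian von Neumann algebra $\mathcal{S}$ then forcing every one of its elements, and in particular $T_\infty$, to be normal.
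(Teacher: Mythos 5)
Your proof is correct, and on the two points the paper's own argument actually establishes, you follow essentially the same route: the paper extracts the subsequence by a diagonal argument over a countable dense subset of the unit ball of $\hilb$ (your orthonormal-basis version is the same device, and both implicitly use the separability of $\hilb$, which the paper's standing conventions guarantee), obtains the limit operator from the limiting bounded sesquilinear form via Riesz representation, and then proves commutation with each $T_m$ by exactly the computation you compress into ``multiplication by a fixed operator is separately WOT-continuous''. The genuine difference is normality: the paper's proof stops at commutation and never proves that the limit operator is normal, even though normality is part of the statement and is invoked in the proof of Lemma~\ref{mautnerplusplus}. Your argument supplies this missing step, and by the right mechanism: a WOT limit of normal contractions need not be normal in general (the unilateral shift is a WOT limit of permutation unitaries), so the pairwise-commutation hypothesis has to enter somewhere, and in your proof it enters through Fuglede's theorem, which makes the unital $*$-algebra $\mathcal{A}$ generated by the $T_n$'s commutative; separate WOT-continuity applied twice shows that the WOT-closure $\mathcal{S}$ of $\mathcal{A}$ is still commutative, WOT-continuity of the adjoint shows $\mathcal{S}$ is self-adjoint, and then $T_\infty$ and $T_\infty^*$ both lie in $\mathcal{S}$, hence commute. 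If you want a leaner version of this last step, you can dispense with $\mathcal{S}$ entirely and pass to the limit three times: Fuglede gives $T_{n_k}T_m^* = T_m^*T_{n_k}$; letting $k \to \infty$ gives $T_\infty T_m^* = T_m^* T_\infty$ for every $m$, hence by adjoints $T_m T_\infty^* = T_\infty^* T_m$; taking $m = n_k$ and letting $k \to \infty$ once more gives $T_\infty T_\infty^* = T_\infty^* T_\infty$. Either way, your write-up is not only correct but more complete than the paper's proof of this Fact.
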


\subsection{Proof of the theorem}

The proof of the following lemma is obvious.

\begin{lem}\label{ssgroupcontractproduit} If $a = (a^1,\cdots,a^N) \in (G_1 \times \cdots \times G_N)^\nn$, then $U^+_a = U^+_{a^1} \times \cdots \times U^+_{a^N}$ (and this is also valid for $U^-$).
\end{lem}

Let $\pi : G \rightarrow U(\hilb)$ be a unitary representation.

\begin{lem}{\rm \cite[Lemma 2.9]{CIOBO}}
\label{coefficientmautner}
Let $(K_1,A^+,K_2)$ be a Cartan decomposition of $G$. If \[
\exists \phi,\psi \in \hilb\setminus \{0\},\ \exists g \in G^\nn,\quad (\langle  \pi(g_n)\phi,\psi\rangle)_{n \in \nn} {\rm\ doesn't\ converge\ to\ vers}\ 0,\]then \[\exists \phi,\psi \in \hilb\setminus \{0\},\ \exists a \in (A^+)^\nn,\quad (\langle \pi(a_n)\phi,\psi\rangle)_{n \in \nn} {\rm\ doesn't\ converge\ to\ vers}\ 0.\]
\end{lem}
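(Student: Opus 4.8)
The plan is to use the Cartan decomposition $G = K_1 A^+ K_2$ together with the compactness of $K_1$ and $K_2$ to push a non-vanishing matrix coefficient along an arbitrary sequence onto one along a sequence in $A^+$, at the cost of replacing $\phi,\psi$ by the images of these vectors under suitable unitaries. The point is that applying a convergent sequence of unitaries on the left and on the right perturbs the coefficient only by a term that vanishes in the limit.

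Concretely, I would first reduce the hypothesis: if $(\langle \pi(g_n)\phi,\psi\rangle)_n$ does not converge to $0$, then after passing to a subsequence there is $\epsilon > 0$ with $\vert\langle \pi(g_n)\phi,\psi\rangle\vert \geq \epsilon$ for all $n$. Using $G = K_1 A^+ K_2$, write $g_n = k_{1,n}\, a_n\, k_{2,n}$ with $k_{1,n}\in K_1$, $a_n\in A^+$, $k_{2,n}\in K_2$. Since $G$ is second countable and locally compact, it is metrizable, so the compact sets $K_1,K_2$ are sequentially compact; I extract a further subsequence along which $k_{1,n}\to k_1\in K_1$ and $k_{2,n}\to k_2\in K_2$. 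Set $\phi' := \pi(k_2)\phi$ and $\psi' := \pi(k_1^{-1})\psi$, which are nonzero since $\pi$ is unitary (so $\Vert\phi'\Vert = \Vert\phi\Vert$ and $\Vert\psi'\Vert = \Vert\psi\Vert$). Writing $\phi_n := \pi(k_{2,n})\phi$ and $\psi_n := \pi(k_{1,n}^{-1})\psi$, strong continuity of $\pi$ gives $\phi_n \to \phi'$ and $\psi_n \to \psi'$ in norm, and unitarity gives $\pi(k_{1,n})^* = \pi(k_{1,n}^{-1})$, whence
\[
\langle \pi(g_n)\phi,\psi\rangle = \langle \pi(a_n)\pi(k_{2,n})\phi,\ \pi(k_{1,n}^{-1})\psi\rangle = \langle \pi(a_n)\phi_n,\psi_n\rangle.
\]
The core estimate is then to replace $\phi_n,\psi_n$ by $\phi',\psi'$ via
\[
\langle \pi(a_n)\phi_n,\psi_n\rangle - \langle \pi(a_n)\phi',\psi'\rangle = \langle \pi(a_n)(\phi_n-\phi'),\psi_n\rangle + \langle \pi(a_n)\phi',\ \psi_n-\psi'\rangle,
\]
where each term is bounded by Cauchy--Schwarz using $\Vert\pi(a_n)\Vert = 1$ and $\Vert\psi_n\Vert = \Vert\psi\Vert$, hence tends to $0$. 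Therefore $\vert\langle \pi(a_n)\phi',\psi'\rangle\vert \geq \epsilon - o(1) \geq \epsilon/2$ for $n$ large, so $(\langle \pi(a_n)\phi',\psi'\rangle)_n$ does not converge to $0$, giving the conclusion with $a := (a_n)_n \in (A^+)^\nn$ and the vectors $\phi',\psi'$.

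I do not anticipate a genuine obstacle: the whole argument is a compactness-and-continuity manipulation. The only points that require a little care are the extraction of convergent subsequences from $K_1$ and $K_2$ (which relies on metrizability, so that compactness yields sequential compactness) and checking that the replacement vectors stay nonzero, which is immediate from unitarity. Note also that this lemma makes no claim that $a_n \to \infty$; producing such a sequence and invoking Mautner's property happens at a later stage of the overall proof.
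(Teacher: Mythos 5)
Your proof is correct and is essentially the same compactness-and-continuity argument as the one the paper relies on (the lemma is quoted from \cite[Lemma 2.9]{CIOBO} without proof, and Ciobotaru's proof proceeds exactly as you do: decompose $g_n = k_{1,n}a_nk_{2,n}$, extract convergent subsequences in the compact sets, and transfer the lower bound on the coefficient to $\phi' = \pi(k_2)\phi$, $\psi' = \pi(k_1^{-1})\psi$ via strong continuity and Cauchy--Schwarz). Your attention to the permissibility of passing to a subsequence and to the metrizability needed for sequential compactness is consistent with the paper's standing assumption that $G$ is locally compact and second countable.
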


\begin{lem}{\rm \cite[Lemma 2.8]{CIOBO}}
\label{coefficientdiagonal}
Let $g \in G^\nn$. If \[\exists \phi,\psi \in \hilb\setminus \{0\},\quad (\langle \pi(g_n)\phi,\psi\rangle)_{n \in \nn} {\rm\ doesn't\ converge\ to\ vers}\ 0,\]then
\[\exists \phi\in \hilb\setminus \{0\},\quad (\langle \pi(g_n)\phi,\phi\rangle)_{n \in \nn} {\rm\ doesn't\ converge\ to\ vers}\ 0.\]
\end{lem}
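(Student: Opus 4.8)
The plan is to reduce the off-diagonal matrix coefficient to diagonal ones by the complex polarization identity, exploiting that $\hilb$ is a complex Hilbert space. Concretely, for any bounded operator $T$ and any $\phi,\psi \in \hilb$ one has
\[\langle T\phi,\psi\rangle = \frac{1}{4}\sum_{k=0}^{3} i^k\, \langle T(\phi + i^k\psi),\, \phi + i^k\psi\rangle.\]
Applying this with $T = \pi(g_n)$, I would express the hypothesized non-vanishing coefficient $\langle \pi(g_n)\phi,\psi\rangle$ as a fixed linear combination (with the coefficients $\tfrac14 i^k$, which do not depend on $n$) of the four diagonal coefficients $\langle \pi(g_n)\xi_k,\xi_k\rangle$, where $\xi_k := \phi + i^k\psi$ for $k \in \{0,1,2,3\}$.

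Next I would argue on the limits. If all four sequences $(\langle \pi(g_n)\xi_k,\xi_k\rangle)_{n}$ converged to $0$, then, being a fixed finite linear combination of them, the sequence $(\langle \pi(g_n)\phi,\psi\rangle)_{n}$ would also converge to $0$, contradicting the hypothesis. Hence there is at least one index $k$ for which $(\langle \pi(g_n)\xi_k,\xi_k\rangle)_{n}$ does not converge to $0$; I then take $\xi_k$ as the desired vector. It remains only to check that this $\xi_k$ is non-zero, which is immediate: were $\xi_k = 0$, the sequence $\langle \pi(g_n)\xi_k,\xi_k\rangle$ would be identically $0$, hence convergent to $0$, contrary to the choice of $k$. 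This produces the required single vector.

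There is essentially no hard step here; the only point one must not overlook is that the argument genuinely uses complex scalars. The polarization identity above fails over $\rr$ (a rotation of the plane has vanishing diagonal coefficients while being non-trivial off the diagonal), which is exactly why the standing assumption that $\hilb$ is a complex Hilbert space is needed. An alternative, slightly heavier route would be to extract, using the sequential weak operator compactness of the unit ball as in Fact \ref{limitcommutnormal}, a subsequence with $\pi(g_n) \to T$ in the weak operator topology where $\langle T\phi,\psi\rangle \neq 0$, and then invoke the complex-scalar fact that $\langle T\xi,\xi\rangle = 0$ for all $\xi$ forces $T = 0$ (itself a consequence of the same polarization identity). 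I would nonetheless favour the direct polarization above, since it is cleaner and avoids passing to a subsequence altogether.
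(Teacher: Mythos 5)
Your proof is correct. Note that the paper itself gives no argument for this lemma — it is quoted from \cite[Lemma 2.8]{CIOBO}, where the reduction to diagonal coefficients is obtained by passing to a weak operator limit: one extracts a subsequence with $\pi(g_n) \to T$ weakly and $\langle T\phi,\psi\rangle \neq 0$, then uses that a non-zero operator on a \emph{complex} Hilbert space has a non-zero diagonal coefficient. Your polarization identity
\[\langle \pi(g_n)\phi,\psi\rangle = \frac{1}{4}\sum_{k=0}^{3} i^k\, \langle \pi(g_n)(\phi + i^k\psi),\, \phi + i^k\psi\rangle\]
is a more elementary route to the same conclusion: it avoids any compactness argument and any passage to a subsequence, and it isolates exactly where complexity of the scalars is used (the identity, and hence the lemma, fails over $\rr$, as your rotation example shows). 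The check that the chosen $\xi_k$ is non-zero is the right final observation. The two approaches are of course closely related — the weak-compactness route ultimately invokes the same polarization identity to see that a non-zero operator has a non-zero diagonal entry — but your direct version is cleaner and is a genuine (if mild) simplification of the cited argument.
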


\begin{lem}\label{ssgroupefermefixe} If $\phi \in \hilb$, the set $\{g \in G \tq \pi(g) \phi = \phi\}$ is a closed subgroup.
\end{lem}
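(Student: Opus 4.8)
The plan is to prove the two assertions — that the set is a subgroup and that it is closed — separately, both being essentially formal once the relevant continuity is identified. Throughout, write $S := \{g \in G \tq \pi(g)\phi = \phi\}$.

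First I would check that $S$ is a subgroup. Since $\pi$ is a homomorphism, $\pi(e) = \id_{\hilb}$ fixes $\phi$, so $e \in S$. If $g, h \in S$, then $\pi(gh)\phi = \pi(g)\pi(h)\phi = \pi(g)\phi = \phi$, so $gh \in S$; and applying the unitary $\pi(g)^{-1} = \pi(g^{-1})$ to the equality $\pi(g)\phi = \phi$ yields $\pi(g^{-1})\phi = \phi$, so $g^{-1} \in S$. Hence $S$ is a subgroup of $G$.

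For closedness, the key point is to use that $\pi$ is continuous for the strong operator topology, which by the standing convention on unitary representations means precisely that for each fixed vector the orbit map is norm-continuous. In particular the map $\Phi : G \to \hilb$, $g \mapsto \pi(g)\phi$, is continuous when $\hilb$ carries its norm topology, so $S = \Phi^{-1}(\{\phi\})$ is the preimage of a closed singleton and is therefore closed. Equivalently, since each $\pi(g)$ is unitary one has $\Vert \pi(g)\phi - \phi \Vert^2 = 2\Vert \phi \Vert^2 - 2\,\mathrm{Re}\,\langle \pi(g)\phi, \phi\rangle$, so $S$ is the level set $\{g \in G \tq \mathrm{Re}\,\langle \pi(g)\phi,\phi\rangle = \Vert \phi \Vert^2\}$ of a continuous matrix coefficient.

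I do not expect any genuine obstacle here, as the statement is formal. The only point requiring a little care is invoking the correct notion of continuity: it is the strong operator continuity of $\pi$ (rather than, say, mere weak measurability) that makes the orbit map $\Phi$ norm-continuous, and hence makes $S$ closed. This is exactly the convention fixed for the symbol $\pi : G \rightarrow U(\hilb)$ in the text preceding these lemmas.
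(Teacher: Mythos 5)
Your proof is correct and follows exactly the same route as the paper's one-line argument: the subgroup property comes from $\pi$ being a morphism, and closedness from the strong operator continuity of $\pi$, which makes the orbit map $g \mapsto \pi(g)\phi$ norm-continuous. You have merely spelled out the details (and added an equivalent reformulation via the matrix coefficient), so there is nothing to add.
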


\begin{proof}[Proof] It is a subgroup because $\pi$ is a morphism, and it is closed because $\pi$ is strongly continuous.
\end{proof}

We extract the following lemma out of \cite[Lemma 3.1]{CIOBO} for the sake of clarity.

\begin{lem}\label{mautnerplusplus} Let $g \in G^\nn$ such that $\forall n,m \in \nn, g_n g_m = g_m g_n$. Let $\phi \in \hilb\setminus\{0\}$ such that $(\langle \pi(g_n)\phi,\phi\rangle)_{n \in \nn}$ doesn't converge to $0$. Then there is $\phi_0 \in \hilb\setminus\{0\}$, fixed by $U^+_{g}$ and by $U^-_g$.
\end{lem}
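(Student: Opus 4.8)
The plan is to realize a candidate invariant vector as $T\phi$, where $T$ is a weak-operator limit of the commuting unitaries $\pi(g_n)$, and then to exploit the \emph{normality} of $T$ to make a single vector work simultaneously for $U^+_g$ and $U^-_g$. First, since the $g_n$ commute and $\pi$ is a morphism, the operators $\pi(g_n)$ are commuting unitaries, in particular commuting normal operators of norm $1$. Because $(\langle \pi(g_n)\phi,\phi\rangle)_n$ does not converge to $0$, there exist $\epsilon>0$ and a subsequence along which $|\langle\pi(g_n)\phi,\phi\rangle|\geq\epsilon$. Applying Fact \ref{limitcommutnormal} to this subsequence, I extract a further subsequence $b$ of $g$ such that $\pi(b_n)$ converges, in the weak operator topology, to a normal operator $T$. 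Passing to the limit in $|\langle\pi(b_n)\phi,\phi\rangle|\geq\epsilon$ gives $|\langle T\phi,\phi\rangle|\geq\epsilon>0$, so that $\phi_0:=T\phi$ is a nonzero vector.

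Next I show that $U^+_b$ fixes the range of $T$ and that $U^-_b$ fixes the range of $T^*$. Let $h\in U^+_b$, so $b_n^{-1}hb_n\to e$; by strong continuity, $\pi(b_n)^{-1}\pi(h)\pi(b_n)\xi\to\xi$ for every $\xi$, and since $\pi(b_n)$ is an isometry this rewrites as $(\pi(h)-I)\pi(b_n)\xi\to 0$ in norm. On the other hand $\pi(b_n)\xi\to T\xi$ weakly, so applying the bounded operator $\pi(h)-I$ gives $(\pi(h)-I)\pi(b_n)\xi\to(\pi(h)-I)T\xi$ weakly; by uniqueness of the weak limit, $\pi(h)T\xi=T\xi$ for all $\xi$. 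The same argument with $h\in U^-_b$, now using that $\pi(b_n)^{-1}=\pi(b_n)^*\to T^*$ weakly (the adjoint being weak-operator continuous), yields $\pi(h)T^*\xi=T^*\xi$ for all $\xi$.

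The crux is to obtain a single vector fixed by both subgroups, since a priori $U^+_b$ only fixes $\mathrm{ran}\,T$ while $U^-_b$ only fixes $\mathrm{ran}\,T^*$, and these need not coincide. This is exactly where the normality of $T$ enters: normality gives $\ker T=\ker T^*$, whence $\overline{\mathrm{ran}\,T}=(\ker T^*)^\perp=(\ker T)^\perp=\overline{\mathrm{ran}\,T^*}=:M$. By Lemma \ref{ssgroupefermefixe} the set of vectors fixed by a given $\pi(h)$ is closed, so $U^+_b$ and $U^-_b$ each fix the whole of $M$ pointwise; in particular both fix $\phi_0=T\phi\in\mathrm{ran}\,T\subseteq M$. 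Finally, passing to a subsequence only enlarges the contracting subgroups, i.e. $U^+_g\subseteq U^+_b$ and $U^-_g\subseteq U^-_b$, so $\phi_0$ is fixed by $U^+_g$ and by $U^-_g$, as required.

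I expect the only delicate point to be this last merging of the two invariance properties into a single nonzero vector via normality; everything else is a routine interchange of weak and norm limits, together with the standard identity $\overline{\mathrm{ran}\,T}=(\ker T^*)^\perp$. It is worth noting that normality (and not merely the existence of a weak limit) is genuinely used here, which is precisely why Fact \ref{limitcommutnormal} is stated with a normal conclusion.
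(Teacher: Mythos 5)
Your proof is correct, and its overall skeleton is the same as the paper's: extract a subsequence along which the coefficient stays bounded away from $0$, use Fact \ref{limitcommutnormal} to get a weak-operator limit $T$ which is normal, note $\langle T\phi,\phi\rangle\neq 0$ so $T\phi\neq 0$, and take $\phi_0=T\phi$. Where you genuinely diverge is in how $U^+$ and $U^-$ are made to fix the \emph{same} vector. The paper runs a single computation starting from the identity $\langle \pi(u)E\phi-E\phi,\psi\rangle=\langle E\pi(u)\phi-E\phi,\psi\rangle$ (which would require $E$ to commute with $\pi(u)$, not merely with the $\pi(g_n)$'s) and then asserts that ``the same procedure'' handles $U^-_g$; but the two natural computations give asymmetric conclusions — $U^+$ fixes $\operatorname{ran}T$ while $U^-$ fixes $\operatorname{ran}T^*$. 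You isolate exactly this asymmetry and close it with $\ker T=\ker T^*$, hence $\overline{\operatorname{ran}T}=\overline{\operatorname{ran}T^*}$, for normal $T$. This is the honest point where normality is used, and your version makes the hypothesis ``normal'' in Fact \ref{limitcommutnormal} do visible work, whereas the paper's displayed computation obscures it (and contains a slip in the adjoint step, $(\pi(g_nug_n^{-1}))^*=\pi(g_nu^{-1}g_n^{-1})$, not $\pi(g_n^{-1}u^{-1}g_n)$). Two minor points: the closedness you invoke is that $\ker(\pi(h)-\id)$ is a closed subspace, which is immediate but is not what Lemma \ref{ssgroupefermefixe} states (that lemma concerns the stabilizer \emph{in $G$} of a fixed vector); and you never use the commutation of $T$ with the $\pi(g_n)$'s, only its normality, so your argument is if anything slightly leaner than the statement of Fact \ref{limitcommutnormal} suggests.
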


\begin{proof}[Proof] Up to extraction, we can assume that $(\pi(g_n))_{n \in \nn}$ converges, for the weak operator topology, to a normal operator $E$, which commutes with the $\pi(g_n)'s$, according to Fact \ref{limitcommutnormal}.

Because of the weak operator convergence of the operators, $\langle E\phi,\phi\rangle \not = 0$, which implies that $E\phi \not = 0$. Let us prove that $E\phi$ is fixed by $U^\pm_g$.

Let $u \in U^+_g$, and $\psi \in \hilb$. We have
\[\begin{array}{rcl}
\vert \langle \pi(u)E\phi - E\phi,\psi \rangle \vert &= &\vert\langle E\pi(u)\phi - E\phi,\psi \rangle \vert\\
&= &\displaystyle\left\vert \lim_{n \to \infty} \langle\pi(g_n)\pi(u)\phi - \pi(g_n)\phi,\psi\rangle\right\vert\\
&= &\displaystyle\left\vert \lim_{n \to \infty} \langle\pi(g_nug^{-1}_n)\pi(g_n)\phi - \pi(g_n)\phi,\psi\rangle\right\vert\\
&= &\displaystyle\left\vert \lim_{n \to \infty} \left\langle\left(\pi(g_nug^{-1}_n) - \id\right)\pi(g_n)\phi,\psi\right\rangle\right\vert\\
&= &\displaystyle\left\vert \lim_{n \to \infty} \left\langle\pi(g_n)\phi,\left(\pi(g_nug^{-1}_n) - \id\right)^*\psi\right\rangle\right\vert\\
&= &\displaystyle\left\vert \lim_{n \to \infty} \left\langle\pi(g_n)\phi,\left(\pi(g^{-1}_nu^{-1}g_n) - \id\right)\psi\right\rangle\right\vert\\
&\leq &\lim_{n \to \infty} \Vert \pi(g_n)\phi \Vert \cdot \Vert \left(\pi(g^{-1}_nu^{-1}g_n) - \id\right)\psi\Vert\\ 
&= &\lim_{n \to \infty} \Vert \phi \Vert \cdot \Vert \left(\pi(g^{-1}_nu^{-1}g_n) - \id\right)\psi\Vert\\
{\small \mbox{($u^{-1} \in U^+_G$)}}&\to &0\\
\end{array}\]This being true for all $\psi$, we therefore have $\pi(u)E\phi = E\phi$. We use the same procedure to prove that $u \in U^-_g$.
\end{proof}

\begin{proof}[Proof of the theorem]

Let us prove that if there is $\phi, \psi \in \hilb$ such that we don't have \[\lim_{g \to \infty} \langle \pi(g)\phi,\psi\rangle = 0,\] then there is $F \in \cali{F}$ and a vector $\phi_0 \in \hilb \setminus\{0\}$ fixed by $\pi(F)$.

So, let $\phi,\psi \in \hilb$ be as such. There is a sequence $g \in G^\nn$ that goes to infinity such that $(\langle\pi(g_n)\phi,\psi\rangle)_{n \in \nn}$ doesn't converge to $0$. Up to extraction, we can assume that there exists $F \in \cali{F}$ such that $F \subseteq \overline{\langle U^+_g,U^-_g\rangle}$. According to Lemma \ref{coefficientmautner} and Lemma \ref{coefficientdiagonal}, we can assume that $(\langle\pi(g_n)\phi,\phi\rangle)_{n \in \nn}$ doesn't converge to $0$, and that $g \in (A^+)^\nn$. According to Lemma \ref{mautnerplusplus}, there is $\phi_0 \in \hilb \setminus\{0\}$ that is fixed by $U^\pm_g$. According to Lemma \ref{ssgroupefermefixe}, $\phi_0$ is, in fact, fixed by $\overline{\langle U^+_g,U^-_g\rangle}$, and therefore, by $F$.
\end{proof}

\subsection{Proofs of the corollaries}

The proof of Corollary \ref{corollaire1} is an obvious application of the following lemma.

\begin{lem} Let $G_1,...,G_N$ be topological groups such that for each $i$, $G_i$ has a Cartan decomposition $(K_{i,1}, A^+_1,K_{i,2})$, and $(G_i,A^+_i)$ has the Mautner property. Then \[(K_{1,1} \times \cdots \times K_{n,1}, A^+_1 \times \cdots \times A^+_n, K_{1,2} \times \cdots \times K_{n,2})\] is a Cartan decomposition of $G := G_1 \times \cdots \times G_N$ such that $(G,A^+_1 \times \cdots \times A^+_n)$ has the Mautner property relative to $\{G_1,...,G_N\}$.
\end{lem}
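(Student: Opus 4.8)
The plan is to verify the two assertions separately: first that the proposed triplet is a Cartan decomposition of $G$, and then that $(G, A^+_1 \times \cdots \times A^+_N)$ enjoys the Mautner property relative to $\{G_1, \ldots, G_N\}$, where as usual each $G_i$ is identified with the subgroup $\{e\} \times \cdots \times G_i \times \cdots \times \{e\}$ of $G$.

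For the first part I would simply check the three defining conditions componentwise. Each of $K_{1,1} \times \cdots \times K_{N,1}$ and $K_{1,2} \times \cdots \times K_{N,2}$ is a finite product of compact sets, hence compact; the set $A^+ := A^+_1 \times \cdots \times A^+_N$ is an abelian subsemigroup of $G$ because multiplication in $G$ is componentwise and each $A^+_i$ is an abelian subsemigroup; and the factorization $G = \prod_i (K_{i,1} A^+_i K_{i,2})$ rearranges, again using componentwise multiplication, into $(K_{1,1} \times \cdots \times K_{N,1})\, A^+\, (K_{1,2} \times \cdots \times K_{N,2})$. This part is entirely routine.

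For the Mautner property, let $a \in (A^+)^\nn$ with $\lim_n a_n = \infty$, and write $a_n = (a^1_n, \ldots, a^N_n)$. First I would locate an index along which the sequence escapes: if every component sequence $(a^i_n)_n$ were contained in a compact set $K_i \subseteq G_i$, then $a$ would remain inside the compact product $K_1 \times \cdots \times K_N$, contradicting $a_n \to \infty$; hence some $(a^i_n)_n$ is unbounded. Assuming, as throughout, that the $G_i$ are locally compact and second countable, the recalled fact that an unbounded sequence admits a subsequence tending to infinity lets me pass to a subsequence $b$ of $a$ for which $b^i \to \infty$ in $G_i$. Since $b^i \in (A^+_i)^\nn$ and $(G_i, A^+_i)$ has the (non-relative) Mautner property, a further extraction produces a subsequence $c$ of $a$ with $G_i \subseteq \overline{\langle U^+_{c^i}, U^-_{c^i}\rangle}$, the closure being taken in $G_i$, that is $\overline{\langle U^+_{c^i}, U^-_{c^i}\rangle} = G_i$.

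It then remains to transport this conclusion back to $G$. By Lemma \ref{ssgroupcontractproduit}, $U^+_c = U^+_{c^1} \times \cdots \times U^+_{c^N}$ and likewise for $U^-$, so $U^+_c$ (resp. $U^-_c$) contains the copy of $U^+_{c^i}$ (resp. $U^-_{c^i}$) sitting in the $i$-th factor; consequently $\langle U^+_c, U^-_c\rangle \supseteq \langle U^+_{c^i}, U^-_{c^i}\rangle$ as subsets of $G_i \hookrightarrow G$. Since $G_i$ is closed in $G$, closure in $G$ agrees with closure in $G_i$ on subsets of $G_i$, whence $\overline{\langle U^+_c, U^-_c\rangle} \supseteq \overline{\langle U^+_{c^i}, U^-_{c^i}\rangle} = G_i$, and taking $F := G_i \in \{G_1, \ldots, G_N\}$ finishes the verification. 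The only points requiring any care — and the closest thing to an obstacle in an otherwise routine argument — are the standing topological hypotheses needed to extract a component subsequence tending to infinity and the harmless identification of closures in $G_i$ with closures in $G$; the structural heart of the proof is just the product formula of Lemma \ref{ssgroupcontractproduit}.
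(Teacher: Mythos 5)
Your argument is correct and follows essentially the same route as the paper's proof: identify an unbounded component $a^j$, extract a subsequence tending to infinity in $G_j$, invoke the Mautner property of $(G_j,A^+_j)$, and transport the conclusion back to $G$ via Lemma \ref{ssgroupcontractproduit}. Your added remarks on the standing topological hypotheses and on closures in $G_i$ versus $G$ are care points the paper leaves implicit, but they do not change the argument.
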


\begin{proof}[Proof] It is clear that the announced triplet is a Cartan decomposition of $G$. We just have to prove that $(G,A^+_1\times \cdots \times A^+_N)$ satisfies Mautner's property, relative to $\{G_1,...,G_N\}$. Let us denote $A^+ := A^+_1 \times \cdots \times A^+_N$. Let $a = (a^1,\cdots, a^N) \in (A^+)^\nn$ such that $\lim_{n \to \infty} a_n = \infty$. The set $\{i \in \{1,...,N\} \tq (a^i_n)_{n \in \nn}\ {\rm is\ not\ bounded}\}$ is not empty, unless $a$ is itself bounded, but it isn't by hypothesis. Let $j$ be such that $(a^j_n)_{n \in \nn}$ is unbounded. Then there is an increasing $h_1 : \nn \rightarrow \nn$ such that $(a^j_{h_1(n)})_{n \in \nn}$ goes to infinity in $G_j$. By hypothesis on $G_j$, there is an increasing $h_2 : \nn \rightarrow \nn$ such that if we denote $b^j := (a^j_{h_1(h_2(n))})_{n \in \nn}$, then $\overline{\langle U^+_{b^j},U^-_{b^j}\rangle } = G_j$. We then have, by Lemma \ref{ssgroupcontractproduit} \[\overline{\langle U^+_{b},U^-_{b}\rangle } \supseteq \{1\}\times \cdots \times \{1\} \times G_j \times \{1\} \times \cdots \times \{1\}.\]
\end{proof}

To a measure preserving action of a topological group on a probability space $X$, one can associate a unitary representation of the group in $U(L^2(X))$ (called the Koopman representation) such that the action is ergodic if and only if the only invariant vectors of the representation are the constants, and such that the mixing is equivalent to the vanishing at infinity of all matrix coefficients of the subrepresentation on the subspace of functions of zero integral. This said, the proof of Corollary \ref{corollaire2} is obvious.

The proof of Corollary \ref{reseauirreductibleergodicite} goes as follows.

\begin{proof}[Proof of Corollary \ref{reseauirreductibleergodicite}] Thanks to Corollary \ref{corollaire2}, it is enough to check that for every $i$, $G_i \curvearrowright G/\Gamma$ is ergodic. According to \cite[Corollary 2.2.3, p. 18]{ZIMMER}, $G_i \curvearrowright G/\Gamma$ is ergodic if and only if $\Gamma \curvearrowright G/G_i$ is ergodic. But this action is ergodic if and only if the image of $\Gamma$ in $G_1\times \cdots \times \widehat{G_i} \times \cdots \times G_N$ is dense, and this is precisely the case when $\Gamma$ is irreducible.
\end{proof}

\bibliographystyle{alpha}
\bibliography{bibliord}

\newcommand{\etalchar}[1]{$^{#1}$}
\begin{thebibliography}{CdCL{\etalchar{+}}11}

\bibitem[BM00]{BEKKAMAYER}
B.~Bekka and M.~Mayer.
\newblock {\em Ergodic Theory and Topological Dynamics of Group Actions of
  Homogeneous Spaces.}, volume Lecture Notes Series 269.
\newblock Cambridge University Press, 2000.

\bibitem[CdCL{\etalchar{+}}11]{CLUCORLOUTESVAL}
R.~Cluckers, Y.~de~Cornulier, N.~Louvet, R.~Tessera, and A.~Valette.
\newblock The {Howe-Moore} property for real and p-adic groups.
\newblock {\em Math. Scand.}, 109(2):201--224, 2011.

\bibitem[Cio17]{CIOBO}
C.~Ciobotaru.
\newblock A unified proof of the {Howe-Moore} property.
\newblock {\em Journal of Lie Theory}, 2017.

\bibitem[HM79]{HOWEMOORE}
Roger~E. Howe and Calvin~C. Moore.
\newblock Asymptotic properties of unitary representations.
\newblock {\em Journal of Functional Analysis}, 32:72--96, 1979.

\bibitem[Zim84]{ZIMMER}
R.~Zimmer.
\newblock {\em Ergodic theory and semisimple groups.}
\newblock Birkhäuser, 1984.

\end{thebibliography}

\end{document}